\definecolor{darkred}{rgb}{1,0,0} %can change the intensity in [0,1]
\definecolor{darkgreen}{rgb}{0,0.8,0}
\definecolor{darkblue}{rgb}{0,0,1}
\newcommand{\labell}[1] {\label{#1}}
\numberwithin{equation}{section}
\newtheorem {Theorem}{Theorem}
\numberwithin{Theorem}{section}
\newtheorem {Lemma}[Theorem]    {Lemma}
\newtheorem {Proposition}[Theorem]{Proposition}
\theoremstyle{definition}
\newtheorem{Definition}[Theorem]{Definition}
\theoremstyle{remark}
\newtheorem{Remark}[Theorem]{Remark}
\newtheorem{Example}[Theorem]{Example}
\chardef\csname pre amssym.def at\endcsname=\the\catcode`\@
\def\undefine#1{\let#1\undefined}
\def\newsymbol#1#2#3#4#5{\let\next@\relax
 \ifnum#2=\@ne\let\next@\msafam@\else
 \ifnum#2=\tw@\let\next@\msbfam@\fi\fi
 \mathchardef#1="#3\next@#4#5}
\def\mathhexbox@#1#2#3{\relax
 \ifmmode\mathpalette{}{\m@th\mathchar"#1#2#3}%
 \else\leavevmode\hbox{$\m@th\mathchar"#1#2#3$}\fi}
\def\hexnumber@#1{\ifcase#1 0\or 1\or 2\or 3\or 4\or 5\or 6\or 7\or 8\or
 9\or A\or B\or C\or D\or E\or F\fi}
\font\teneufm=eufm10
\font\seveneufm=eufm7
\font\fiveeufm=eufm5
\def    \eps    {\epsilon}
\newcommand{\CA}{{\mathcal A}}
\newcommand{\CS}{{\mathcal S}}
\newcommand{\supp}{\operatorname{supp}}
\newcommand{\tH}{\tilde{H}}
\newcommand{\bPP}{\bar{\mathcal P}}
\def    \F      {{\mathbb F}}
\def    \R      {{\mathbb R}}
\def    \Z      {{\mathbb Z}}
\def    \N      {{\mathbb N}}
\def    \CP     {{\mathbb C}{\mathbb P}}
\def    \12    {{\frac{1}{2}}}
\def    \p      {\partial}
\def    \im     {\operatorname{im}}
\def    \HF     {\operatorname{HF}}
\def    \H     {\operatorname{H}}
\def    \CF     {\operatorname{CF}}
\def    \bx     {\bar{x}}
\def    \by     {\bar{y}}
\def    \bz     {\bar{z}}
\def    \MUCZ  {\operatorname{\mu_{\scriptscriptstyle{CZ}}}}
\def    \s  {\operatorname{c}}
\def    \ssminus        {\smallsetminus}
\begin{document}

%%%%%%%%%%%%%%%%%%%%%%%%%%%%%%
%   TEXT FORMATTING

\setlength{\smallskipamount}{6pt}
\setlength{\medskipamount}{10pt}
\setlength{\bigskipamount}{16pt}

%%%%%%%%%%%%%%%%%%%%%%%%%%

%%%%%%%%%%%%%%%%%%%%%%%%%%

%%%%%%%%%%%           BEGINNING OF  TEXT

%%%%%%%%%%%%%%%%%%%%%%%%%%

\title[Conley Conjecture]{Conley Conjecture for
  Negative Monotone Symplectic Manifolds}

\author[Viktor Ginzburg]{Viktor L. Ginzburg}
\author[Ba\c sak G\"urel]{Ba\c sak Z. G\"urel}

\dedicatory{\bigskip\normalsize{Dedicated to Edi Zehnder on the occasion of 
his seventieth birthday}}

\address{VG: Department of Mathematics, UC Santa Cruz,
Santa Cruz, CA 95064, USA}
\email{ginzburg@math.ucsc.edu}

\address{BG: Department of Mathematics, Vanderbilt University,
Nashville, TN 37240, USA} \email{basak.gurel@vanderbilt.edu}

\subjclass[2000]{53D40, 37J10} \keywords{Periodic orbits, Hamiltonian
  flows, Floer homology, Conley conjecture}

 \date{\today} 

 \thanks{The work is partially supported by NSF and by the faculty
   research funds of the University of California, Santa Cruz.}

%\bigskip

\begin{abstract} 

  We prove the Conley conjecture for negative monotone, closed
  symplectic manifolds, i.e., the existence of infinitely many
  periodic orbits for Hamiltonian diffeomorphisms of such manifolds.

\end{abstract}

\maketitle

\tableofcontents

\section{Introduction and main results}
\labell{sec:main-results}

\subsection{Introduction}
\label{sec:intro}
In this paper, we establish the Conley conjecture for negative
monotone, closed symplectic manifolds. More specifically, we show that
a Hamiltonian diffeomorphism of such a manifold has infinitely many
periodic orbits.

The Conley conjecture was formulated by Charles Conley in 1984 for the
torus (see \cite{Co}), and since then the conjecture has been a
subject of active research focusing on establishing the existence of
infinitely many periodic orbits for broader and broader class of
symplectic manifolds or Hamiltonian diffeomorphisms. Note that the
example of an irrational rotation of $S^2$ shows that the Conley
conjecture does not hold unconditionally -- some assumptions on the
manifold or the Hamiltonian diffeomorphism are required -- in contrast
with, say, the Arnold conjecture; see \cite{GK} for more examples of
this type.

The conjecture was proved for the so-called weakly non-degenerate
Hamiltonian diffeomorphisms in \cite{SZ} (see also \cite{CZ}) and for
all Hamiltonian diffeomorphisms of surfaces other than $S^2$ in
\cite{FrHa} (see also \cite{LeC}). In its original form, as stated in
\cite{Co} for the torus, the conjecture was established in \cite{Hi}
and the case of an arbitrary closed, symplectically aspherical
manifold was settled in \cite{Gi:conley}. The proof was extended to
rational, closed symplectic manifolds $M$ with $c_1(TM)|_{\pi_2(M)}=0$
in \cite{GG:gaps} and the rationality requirement was eliminated in
\cite{He:irr}. Thus, after \cite{SZ}, the main difficulty in
establishing the Conley conjecture for more and more general manifolds
with spherically vanishing first Chern class, overcome in this series
of works, lied in proving the conjecture for totally degenerate
Hamiltonian diffeomorphisms not covered by \cite{SZ}.  (The internal
logic in \cite{FrHa,LeC}, relying on low-dimensional dynamics methods,
is somewhat different.)

Two other variants of the Conley conjecture have also been
investigated. One is the Conley conjecture for Hamiltonian
diffeomorphisms with displaceable support; see, e.g.,
\cite{FS,Gu,HZ,schwarz,Vi:gen}. Here the manifold $M$ is required to
be symplectically aspherical, but not necessarily closed. The second
one is the Lagrangian Conley conjecture or more generally the Conley
conjecture for Hamiltonians with controlled behavior at infinity on
cotangent bundles; see \cite{He:cot,Lo:tori,Lu,Ma}.
 
As has been pointed out above, some requirement on $M$ is necessary
for the Conley conjecture to hold, but this requirement need not
necessarily be the condition that $c_1(TM)|_{\pi_2(M)}=0$. One
hypothetical replacement of this condition, conjectured by the second
author of this paper, is that the minimal Chern number $N$ of $M$ is
sufficiently large, e.g., $N>\dim M$. (The condition
$c_1(TM)|_{\pi_2(M)}=0$ corresponds to $N=\infty$.) More generally, it
might be sufficient to require that the Gromov--Witten invariants of
$M$ vanish, as suggested by Michael Chance and Dusa McDuff, or even
that the quantum product is undeformed. No results in this direction
have been proved to date. Alternatively, one may require $M$ to be
negative monotone and this is the case we consider in the present
paper. (Our result almost, but not quite, fits within the scope of the
Chance-McDuff conjecture: it is easy to see that the Gromov--Witten
invariants of a negative monotone manifold $M$ vanish when $N>\dim
M/2$; cf.\ \cite{LO}.)

A major difficulty in proving the conjecture when
$c_1(TM)|_{\pi_2(M)}\neq 0$ lies in establishing the weakly
non-degenerate, or even non-degenerate, case. For the totally
degenerate case is settled in \cite{GG:gaps} without any requirements
on the Chern class for rational manifolds. (Interestingly, once the
rationality condition is dropped, vanishing of the first Chern class
becomes again essential for the proof,~\cite{He:irr}.) Our proof
relies on keeping track of the behavior of both the index and the
action under iterations along the lines of the reasoning from
\cite{GG:gaps}, in contrast with the argument from \cite{SZ} making
use only of the index, and on the fact that for a negative monotone
manifold the index and action change in the opposite ways under
recapping. The proof also crucially relies on the subadditivity of the
action selector with respect to the pair-of-pants product. This part
of the proof is reminiscent of the argument for Hamiltonians with
displaceable support in \cite{schwarz,Vi:gen} and has no corresponding
counterpart in other proofs of the Conley conjecture; cf.\
\cite{SZ,Hi,Gi:conley,GG:gaps,He:irr}. On the technical level, the
present work draws heavily on \cite{GG:gaps} and can be thought of as
a follow up of that paper.

We also use the machinery developed in \cite{GG:gaps} to relate, in
the symplectically aspherical case, the growth of orbits to the decay
of mean indices for a certain type of periodic orbits.

\subsection{Results}
Recall that a symplectic manifold $(M,\omega)$ is called
\emph{negative monotone} if $[\omega]|_{\pi_2(M)}=\lambda
c_1(TM)|_{\pi_2(M)}$ for some $\lambda<0$. Negative monotone
symplectic manifolds exist in abundance. A standard example is the
hypersurface $z_0^m+\ldots +z_n^m=0$ in $\CP^n$, where $m>n+1$; see,
e.g., \cite[pp.\ 429--430]{MS} for the case of $n=4$.

\begin{Theorem}
\labell{thm:main}
Let $\varphi$ be a Hamiltonian diffeomorphism of a closed, negative
monotone symplectic manifold. Then $\varphi$ has infinitely many 
periodic orbits.
\end{Theorem}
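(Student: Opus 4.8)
The strategy is to argue by contradiction: assume $\varphi=\varphi_H$ has only finitely many periodic orbits, and derive a contradiction by extracting from these a ``robust'' $1$-periodic orbit which, under iteration, would force the creation of new orbits. As in the symplectically aspherical case treated in \cite{GG:gaps}, the first reduction is to observe that if $H$ is non-degenerate we may apply \cite{SZ} directly; so the interesting situation is when $\varphi$ has a degenerate fixed point, and more generally we must control the iterates $\varphi^k$. The key structural input is the homological action selector $\chom(H)$ (the spectral invariant associated to the fundamental class), whose properties -- continuity, the spectrality $\chom(H)\in\operatorname{Spec}(H)$, the triangle inequality, and above all subadditivity under the pair-of-pants product, $\chom(H\# K)\le \chom(H)+\chom(K)$ -- will be used throughout. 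One first shows, using that $M$ is rational or, failing that, approximating by rational data \`a la \cite{He:irr}, that the sequence $\chom(H^{\# k})$ grows at most linearly in $k$, while on the other hand, under the finiteness assumption, the carriers of $\chom(H^{\# k})$ are (recappings of) iterates of finitely many orbits.

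The heart of the matter is then a dichotomy on the mean index of the distinguished orbit. \emph{Negative monotonicity enters precisely here}: under recapping $x\mapsto x\# A$ with $A\in\pi_2(M)$, the action changes by $-\omega(A)$ while the mean index changes by $2c_1(A)=\frac{2}{\lambda}\omega(A)$, and since $\lambda<0$ these two quantities move \emph{in the same direction} — decreasing the action forces decreasing the index (unlike the positive monotone case). Concretely, if the orbit carrying $\chom(H^{\#k})$ had mean index bounded away from $0$ with the ``wrong'' sign, the possibility of lowering the action by recapping would either contradict the spectrality/linear-growth bound on $\chom$ or, via the index–action relation, produce an orbit with index forcing it to contribute nontrivially in a degree where the finiteness assumption is violated. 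Thus one is funneled toward an orbit whose mean index is zero (a ``symplectically degenerate maximum''-type orbit in the terminology of \cite{Hi,GG:gaps}), and here the local Floer homology machinery of \cite{GG:gaps}, combined with the iteration behavior of $\HFL$ at such an orbit, produces infinitely many periodic orbits outright — the same mechanism as in the proof of the Conley conjecture for $c_1|_{\pi_2}=0$.

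In more detail, the steps in order are: (1) set up $\chom$ on $\widetilde{\Ham}(M,\omega)$ and record its axioms, in particular pair-of-pants subadditivity; (2) under the contrary hypothesis, enumerate the finitely many periodic orbits and their recappings, noting the action spectrum of $H^{\#k}$ is then a discrete set whose points are of the form $\mathcal{A}_H(x)\cdot$(combinatorial data)$\,-\,\omega(A)$; (3) prove the linear growth bound $\chom(H^{\#k})\le k\,\chom(H)+C$ and a matching lower bound, so that, after passing to a subsequence, $\chom(H^{\#k})=k c + O(1)$ and is carried by iterates $x^{k}$ of a single orbit $x$ up to recapping; (4) use the negative monotone index–action relation to pin down $\Delta(x)=0$; (5) invoke the local-Floer-homology argument of \cite{GG:gaps} (symplectically degenerate maximum $\Rightarrow$ infinitely many periodic orbits) to conclude.

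The main obstacle I expect is step (3)–(4): controlling the \emph{carrier} of $\chom(H^{\#k})$ — showing it is, up to recapping and for infinitely many $k$, the $k$-th iterate of one fixed orbit rather than drifting among the finitely many available ones or, worse, running off to ever more negative recappings because negative monotonicity permits arbitrarily negative actions at bounded index only in one direction. Taming this requires simultaneously tracking index and action (this is the point where the present argument departs from \cite{SZ}, which tracks only the index) and exploiting subadditivity of $\chom$ under pair-of-pants together with the one-sidedness forced by $\lambda<0$; making the non-rational case work will, as in \cite{He:irr}, demand an additional approximation/limiting argument layered on top of this.
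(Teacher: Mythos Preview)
Your overall architecture matches the paper's almost exactly: argue by contradiction, use a pigeonhole to find one orbit $x$ whose iterates $x^{k_i}$ (with some capping) carry the action selector along an infinite subsequence, split into the cases $\Delta_H(\bar x)=0$ (symplectically degenerate maximum, handled by \cite{GG:gaps}) and $\Delta_H(\bar x)>0$, and in the latter case use the sub-additivity $\s(H^{\#k})\le k\s(H)$ together with the index--action relation under recapping to derive a contradiction. So steps (1), (2), (5) and the pigeonhole part of (3) are exactly what the paper does; your worried ``matching lower bound'' in (3) is not needed, and neither is any irrational approximation in the spirit of \cite{He:irr}: a negative monotone manifold is automatically rational.

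There is, however, a genuine error at the heart of your step (4), and it is a sign error with consequences. Under recapping by $A$ the action changes by $I_\omega(A)=-\omega(A)$ and the mean index changes by $I_{c_1}(A)=-2\left<c_1,A\right>$; since $[\omega]=\lambda c_1$ with $\lambda<0$ one has $I_\omega=(\lambda/2)I_{c_1}$, so for negative monotone manifolds action and index move in \emph{opposite} directions under recapping, not the same direction as you assert (and it is in the positive monotone case that they move together, contrary to your parenthetical). This is precisely the engine of the contradiction in the $\Delta_H(\bar x)>0$ case: for large $k$ one has $k\Delta_H(\bar x)>2n$, so the carrier of $\s(H^{\#k})$ must be $\bar y=\bar x^k\# A$ with $I_{c_1}(A)<0$ to keep $\Delta(\bar y)\in[0,2n]$; sub-additivity gives $\CA_{H^{\#k}}(\bar y)=\s(H^{\#k})\le k\s(H)=k\CA_H(\bar x)$, hence $I_\omega(A)\le 0$; and the two together are impossible because $I_\omega$ and $I_{c_1}$ have opposite signs. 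With your ``same direction'' claim the index constraint $I_{c_1}(A)<0$ and the action constraint $I_\omega(A)\le 0$ are \emph{compatible}, and no contradiction results; your sketched mechanism (``lowering the action by recapping would contradict spectrality/linear growth'') does not produce one either, since the action selector is already a minimax and sub-additivity is a one-sided inequality. Relatedly, your opening reduction ``if $H$ is non-degenerate apply \cite{SZ} directly'' is not available: the argument of \cite{SZ} uses $c_1|_{\pi_2(M)}=0$, and the non-degenerate case when $c_1\neq 0$ is exactly the new content here.
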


\begin{Remark}
  Note that this theorem is somewhat weaker than the versions of the
  Conley conjecture established in the case where
  $c_1(TM)|_{\pi_2(M)}=0$. Namely, while here we prove only the
  existence of infinitely many periodic orbits, the results for
  $c_1(TM)|_{\pi_2(M)}=0$ assert the existence of simple periodic
  orbits with arbitrarily large period, provided that the fixed point
  set is finite.

  Furthermore, the proof of Theorem \ref{thm:main} utilizes
  Hamiltonian Floer theory. Hence, unless $M$ is required to be weakly
  monotone, the argument ultimately, although not explicitly, relies
  on the machinery of multi-valued perturbations and virtual cycles;
  see Remark \ref{rmk:Floer} for further discussion.
\end{Remark}

As has been pointed out above, the other result of this paper, also
established using some of the machinery utilized in the proof of
Theorem \ref{thm:main}, relates the growth of periodic orbits to the
decay of the mean indices for a certain type of periodic orbits (the
so-called carriers of the action selector) for symplectically
aspherical manifolds. In particular, we show that whenever the mean
indices of the carriers are bounded away from zero, the number of
simple periodic orbits grows linearly with the order of iteration.

The paper is organized as follows. In Section \ref{sec:prelim}, we set
our conventions and notation and discuss some standard notions and
results from symplectic topology, needed for the proof of the main
theorem. These include the mean index, the filtered and local Floer
homology, the action selector, and their properties. The main
objective of Section \ref{sec:proof} is to prove Theorem
\ref{thm:main}. The proof hinges on the notion of a carrier of the
action selector discussed in Section \ref{sec:carriers}. The relation
between the growth of orbits and the decay of mean indices is
established in Section \ref{sec:growth}.

\subsection{Acknowledgments} The authors are grateful to Alberto
Abbondandolo, Jan\-ko Latschev and Michael Usher for useful
discussions. A part of this work was carried out while both of the
authors were visiting MSRI during the Symplectic and Contact Geometry
and Topology program. The authors would like to thank MSRI for its
warm hospitality and support.

\section{Preliminaries}
\label{sec:prelim}
The goal of this section is to set notation and conventions and to
give a brief review of Floer homology and several other notions used
in the paper.

\subsection{Conventions and notation}
Let $(M^{2n},\omega)$ be a closed symplectic manifold. Throughout the
paper we will usually assume, for the sake of simplicity, that $M$ is
\emph{rational}, i.e., the group $\left<[\omega],
  {\pi_2(M)}\right>\subset\R$ formed by the integrals of $\omega$ over
the spheres in $M$ is discreet. This condition is obviously satisfied
when $M$ is \emph{negative monotone} as in Theorem \ref{thm:main} or
\emph{monotone}, i.e., $[\omega]\mid_{\pi_2(M)}=\lambda
c_1(M)\!\mid_{\pi_2(M)}$ for some $\lambda<0$ in the former case or
$\lambda\geq 0$ in the latter. Recall also that $M$ is called
\emph{symplectically aspherical} if
$[\omega]\mid_{\pi_2(M)}=0=c_1(M)\!\mid_{\pi_2(M)}$.

All Hamiltonians $H$ on $M$ considered in this paper are assumed to be
$k$-periodic in time, i.e., $H\colon S^1_k\times M\to\R$, where
$S^1_k=\R/k\Z$, and the period $k$ is always a positive integer.  When
the period is not specified, it is equal to one, which is the default
period in this paper. We set $H_t = H(t,\cdot)$ for $t\in
S^1=\R/\Z$. The Hamiltonian vector field $X_H$ of $H$ is defined by
$i_{X_H}\omega=-dH$. The (time-dependent) flow of $X_H$ will be
denoted by $\varphi_H^t$ and its time-one map by $\varphi_H$. Such
time-one maps are referred to as \emph{Hamiltonian diffeomorphisms}.
A one-periodic Hamiltonian $H$ can always be treated as
$k$-periodic. In this case, we will use the notation $H^{\# k}$ and,
abusing terminology, call $H^{\# k}$ the $k$th iteration of $H$.

Let $K$ and $H$ be one-periodic Hamiltonians such that $K_1=H_0$ and
$H_1=K_0$. We denote by $K\# H$ the two-periodic Hamiltonian equal to
$K_t$ for $t\in [0,\,1]$ and $H_{t-1}$ for $t\in [1,\,2]$. Thus, $H^{\#
  k}=H\#\ldots\# H$ ($k$ times).

Let $x\colon S^1_k\to W$ be a contractible loop. A \emph{capping} of
$x$ is a map $u\colon D^2\to M$ such that $u\mid_{S^1_k}=x$. Two
cappings $u$ and $v$ of $x$ are considered to be equivalent if the
integrals of $\omega$ and $c_1(TM)$ over the sphere obtained by
attaching $u$ to $v$ are equal to zero. A capped closed curve
$\bar{x}$ is, by definition, a closed curve $x$ equipped with an
equivalence class of cappings. In what follows, the presence of
capping is always indicated by a bar.

The action of a one-periodic Hamiltonian $H$ on a capped closed curve
$\bar{x}=(x,u)$ is defined by
$$
\CA_H(\bar{x})=-\int_u\omega+\int_{S^1} H_t(x(t))\,dt.
$$
The space of capped closed curves is a covering space of the space of
contractible loops and the critical points of $\CA_H$ on the covering
space are exactly capped one-periodic orbits of $X_H$. The \emph{action
  spectrum} $\CS(H)$ of $H$ is the set of critical values of
$\CA_H$. This is a zero measure set; see, e.g.,
\cite{HZ,schwarz}. When $M$ is rational, $\CS(H)$ is a closed, and
hence nowhere dense, set. Otherwise, $\CS(H)$ is dense in
$\R$. These definitions extend to $k$-periodic
orbits and Hamiltonians in an obvious way. Clearly, the action
functional is homogeneous with respect to iteration:
$$
\CA_{H^{\# k}}(\bx^k)=k\CA_H(\bx).
$$
Here $\bx^k$ stands for the $k$th iteration of the capped orbit $\bx$.

All results of this paper concern only contractible periodic orbits
and throughout of the paper \emph{a periodic orbit is always assumed
  to be contractible, even if this is not explicitly stated}.

A periodic orbit $x$ of $H$ is said to be \emph{non-degenerate} if the
linearized return map $d\varphi_H \colon T_{x(0)}W\to T_{x(0)}W$ has
no eigenvalues equal to one. Following \cite{SZ}, we call $x$
\emph{weakly non-degenerate} if at least one of the eigenvalues is
different from one. A Hamiltonian is non-degenerate if all its
one-periodic orbits are non-degenerate.

Let $\bar{x}$ be a non-degenerate (capped) periodic orbit.  The
\emph{Conley--Zehnder index} $\MUCZ(\bar{x})\in\Z$ is defined, up to a
sign, as in \cite{Sa,SZ}. (Sometimes, we will also use the notation
$\MUCZ(H,\bx)$.) More specifically, in this paper, the Conley--Zehnder
index is the negative of that in \cite{Sa}. In other words, we
normalize $\MUCZ$ so that $\MUCZ(\bar{x})=n$ when $x$ is a
non-degenerate maximum (with trivial capping) of an autonomous
Hamiltonian with small Hessian. The \emph{mean index}
$\Delta_H(\bx)\in\R$ measures, roughly speaking, the total angle swept
by certain eigenvalues with absolute value one of the linearized flow
$d\varphi^t_H$ along $x$ with respect to the trivialization associated
with the capping; see \cite{Lo,SZ}. The mean index is defined
regardless of whether $x$ is degenerate or not and $\Delta_H(\bx)$
depends continuously on $H$ and $\bx$ in the obvious sense. When $x$
is non-degenerate, we have
\begin{equation}
\label{eq:mean-cz}
0<|\Delta_H(\bx)-\MUCZ(H,\bx)|<n.
\end{equation}
Furthermore, the mean index is homogeneous with respect to iteration:
$$
\Delta_{H^{\# k}}(\bx^k)=k\Delta_H(\bx).
$$

\subsection{Floer homology}
In this subsection, we very briefly recall, mainly to set notation,
the construction of the filtered Floer homology. We refer the reader
to, e.g., \cite{HS,MS,Sa,SZ} and also \cite{FO,LT} for detailed
accounts and additional references.

Fix a ground field $\F$. Let $H$ be a non-degenerate Hamiltonian on
$M$.  Denote by $\CF^{(-\infty,\, b)}_k(H)$, where $b\in
(-\infty,\,\infty]$ is not in $\CS(H)$, the vector space of formal
sums
$$ 
\sigma=\sum_{\bar{x}\in \bPP(H)} \alpha_{\bar{x}}\bar{x}. 
$$
Here $\alpha_{\bar{x}}\in\F$ and $\MUCZ(\bar{x})=k$ and
$\CA_H(\bar{x})<b$. Furthermore, we require, for every $a\in \R$, the
number of terms in this sum with $\alpha_{\bar{x}}\neq 0$ and
$\CA_H(\bar{x})>a$ to be finite. The graded $\F$-vector space
$\CF^{(-\infty,\, b)}_*(H)$ is endowed with the Floer differential
counting the anti-gradient trajectories of the action functional; see,
e.g., \cite{HS,MS,Ono:AC,Sa} and also \cite{FO,LT}. Thus, we obtain a
filtration of the total Floer complex $\CF_*(H):=\CF^{(-\infty,\,
  \infty)}_*(H)$. Furthermore, we set $\CF^{(a,\,
  b)}_*(H):=\CF^{(-\infty,\, b)}_*(H)/\CF^{(-\infty,\,a)}_*(H)$, where
$-\infty\leq a<b\leq\infty$ are not in $\CS(H)$. The resulting
homology, the \emph{filtered Floer homology} of $H$, is denoted by
$\HF^{(a,\, b)}_*(H)$ and by $\HF_*(H)$ when
$(a,\,b)=(-\infty,\,\infty)$.

Working over $\F=\Z_2$, we can view a chain $\sigma=\sum
\alpha_{\bx}\bx\in \CF_*(H)$ as simply a collection of periodic orbits
$\bx$ for which $\alpha_{\bx}\neq 0$. In general, will say that $\bx$
\emph{enters} the chain $\sigma$ when $\alpha_{\bx}\neq 0$. Note also
that every $\F$-vector space $\CF_k(H)$ is finite-dimensional when $M$
is negative monotone or monotone with $\lambda>0$.

The total Floer complex and homology are modules over the
\emph{Novikov ring} $\Lambda$. In this paper, the latter is defined as
follows. Let $\omega(A)$ and $\left<c_1(TM),A\right>$ denote the
integrals of $\omega$ and, respectively, $c_1(TM)$ over a cycle $A$.
Set
$$
I_\omega(A)=-\omega(A)\text{ and } I_{c_1}(A)=-2\left<c_1(TM),
  A\right>,
$$
where $A\in\pi_2(M)$.  For instance,
$$
I_\omega=\frac{\lambda}{2}I_{c_1}
$$
when $M$ is monotone or negative monotone. In particular,
\emph{$I_\omega(A)$ and $I_{c_1}(A)$ have opposite signs when $M$ is
  negative monotone}.  Let
$$
\Gamma=\frac{\pi_2(M)}{\ker I_\omega\cap \ker I_{c_1}}.
$$
Thus, $\Gamma$ is the quotient of $\pi_2(M)$ by the equivalence
relation where the two spheres $A$ and $A'$ are considered to be
equivalent if $\omega(A)=\omega(A')$ and $\left<c_1(TM),
  A\right>=\left<c_1(TM), A'\right>$. For example, $\Gamma\simeq \Z$
when $M$ is negative monotone or monotone with $\lambda\neq 0$.  The
homomorphisms $I_\omega$ and $I_{c_1}$ descend to $\Gamma$ from
$\pi_2(M)$.

The group $\Gamma$ acts on $\CF_*(H)$ and on $\HF_*(H)$ via recapping:
an element $A\in \Gamma$ acts on a capped one-periodic orbit $\bar{x}$
of $H$ by attaching the sphere $A$ to the original capping. We denote
the resulting capped orbit by $\bx\# A$. Then,
$$
\MUCZ(\bx\# A)=\MUCZ(\bx)+ I_{c_1}(A)
\text{ and }
\CA_H(\bx\# A)=\CA_H(\bx)+I_\omega(A).
$$
Note that in a similar vein we also have
\begin{equation}
\label{eq:delta}
\Delta_H(\bx\# A)=\Delta_H(\bx)+ I_{c_1}(A),
\end{equation}
regardless of whether $x$ is non-degenerate or not.

The Novikov ring $\Lambda$ is a certain completion of the group ring
of $\Gamma$ over $\F$. Namely, $\Lambda$ is comprised of formal linear
combinations $\sum \alpha_A e^A$, where $\alpha_A\in\F$ and $A\in
\Gamma$, such that for every $a\in \R$ the sum contains only finitely
many terms with $I_\omega(A) > a$ and, of course, $\alpha_A\neq
0$. The Novikov ring $\Lambda$ is graded by setting
$\deg(e^A)=I_{c_1}(A)$ for $A\in\Gamma$.  The action of $\Gamma$ turns
$\CF_*(H)$ and $\HF_*(H)$ into $\Lambda$-modules.

The definition of Floer homology extends to all, not necessarily
non-degenerate, Hamiltonians by continuity.  Let $H$ be an arbitrary
(one-periodic in time) Hamiltonian on $M$ and let the end points $a$
and $b$ of the action interval be outside $\CS(H)$. By definition, we
set
$$
\HF^{(a,\, b)}_*(H)=\HF^{(a,\, b)}_*(\tH),
$$
where $\tH$ is a non-degenerate, small perturbation of $H$. It is well
known that here the right hand side is independent of $\tH$ as long as
the latter is sufficiently close to $H$.  Working with filtered Floer
homology, \emph{we will always assume that the end points of the
  action interval are not in the action spectrum.} (At this point the
background assumption that $M$ is rational becomes essential; see \cite{He:irr}
for the irrational case and also \cite[Remark 2.3]{GG:gaps}.)

The total Floer homology is independent of the Hamiltonian and, up to
a shift of the grading and the effect of recapping, is isomorphic to
the homology of $M$. More precisely, we have
$$
\HF_*(H)\cong \H_ {*+n}(M;\F)\otimes \Lambda
$$
as graded $\Lambda$-modules.

\begin{Remark}
\label{rmk:Floer}
We conclude this discussion by recalling that in order for the Floer
differential to be defined certain regularity conditions must be
satisfied generically. To ensure this, we have to either require $M$ to
be weakly monotone (see \cite{HS,MS,Ono:AC,Sa}) or utilize the
machinery of virtual cycles (see \cite{FO,FOOO,LT} or, for the
polyfold approach, \cite{HWZ,HWZ2} and references therein). In the
latter case the ground field $\F$ is required to have zero
characteristic.  Here we are primarily interested in negative
monotone manifolds.  Such a manifold is weakly monotone if and only if
$N\geq n-2$, where $N$ is the minimal Chern number, i.e., the positive
generator of $\left<c_1(TM),\pi_2(M)\right>\subset\Z$.
\end{Remark}

\subsection{Local Floer homology}
\label{sec:LFH}
In this section, we briefly recall the definition and basic properties
of local Floer homology, following mainly \cite{GG:gaps} although this
notion goes back to the original work of Floer (see, e.g.,
\cite{F:witten,Fl}) and has been revisited a number of times since
then; see, e.g., \cite[Section 3.3.4]{Poz} and
\cite{Gi:conley,GG:gap,He:irr}.

Let $\bx$ be a capped isolated one-periodic orbit of a Hamiltonian
$H\colon S^1\times M\to \R$. Pick a sufficiently small tubular
neighborhood $U$ of $x$ and consider a non-degenerate $C^2$-small
perturbation $\tH$ of $H$.  (Strictly speaking $U$ is a neighborhood
of the graph of the orbit in the extended phase space $S^1\times M$.)
The orbit $x$ splits into non-degenerate
one-periodic orbits of $\tH$, which are $C^1$-close to $x$ and hence
contained in $U$. The capping of $\bx$ gives rise to the cappings of
these orbits in an obvious way and the action of $\tH$ on the
resulting capped orbits is close to $\CA_H(\bx)$.  

Every (anti-gradient) Floer trajectory $u$ connecting such capped
one-periodic orbits of $\tH$ lying in $U$ is also contained in $U$,
provided that $\|\tH-H\|_{C^2}$ is small enough.  Thus, by the
compactness and gluing theorems, every broken anti-gradient trajectory
connecting two such orbits also lies entirely in $U$. Similarly to the
definition of the ordinary Floer homology, consider the complex
$\CF_*(\tH,\bx)$ over $\F$ generated by the capped one-periodic orbits
of $\tH$ in $U$ with cappings inherited from $\bx$, graded by the
Conley--Zehnder index and equipped with the Floer
differential defined in the standard way.  The
continuation argument (see, e.g., \cite{MS,SZ}) shows that the
homology of this complex is independent of the choice of $\tH$ and of
other auxiliary data (e.g., an almost complex structure). We refer to
the resulting homology group $\HF_*(H,\bx)$ as the \emph{local Floer
  homology} of $H$ at $\bx$. This definition is a Floer theoretic
analogue of the local Morse homology or critical modules; see \cite{GrMe}
and also, e.g., \cite[Section 14.1]{Lo} and references therein.

\begin{Example}
  Assume that $\bx$ is non-degenerate and $\MUCZ(\bx)=k$.  Then
  $\HF_l(H,\bx)=\F$ when $l=k$ and $\HF_l(H,\bx)=0$ otherwise.
\end{Example}

\begin{Remark}
  In \cite{Gi:conley,GG:gaps,GG:gap}, the perturbation $\tH$ is
  required to be supported in $U$ or, more precisely,
  $\supp(H-\tH)\subset U$. This requirement is immaterial and we omit it
  in this paper. Furthermore we would like to emphasize that the above
  construction is genuinely local. At no point it depends on global
  properties of $M$ (such as weak monotonicity) or requires virtual
  cycles. In fact, it suffices to have the Hamiltonian $H$ defined
  only on a neighborhood of the orbit. However, a capping of the orbit
  is still essential or, at least, a trivialization of $TM$ along the
  orbit must be fixed.
\end{Remark}

Let us now briefly review the basic properties of local Floer homology
that are essential for what follows, assuming for the sake of
simplicity that $M$ is rational.

Just as the ordinary Floer homology, the local Floer homology is
homotopy invariant. To be more precise, let $H_s$, $s\in [0,\, 1]$, be
a family of Hamiltonians such that $x$ is a uniformly isolated
one-periodic orbit for all $H^s$, i.e., $x$ is the only periodic orbit
of $H_s$, for all $s$, in some open set independent of $s$. Then
$\HF_*(H^s,\bx)$ is constant throughout the family:
$\HF_*(H^0,\bx)=\HF_*(H^1,\bx)$.

Local Floer homology spaces are building blocks for filtered Floer
homology. Namely, assume that all one-periodic orbits $x$ of $H$ are
isolated and $\HF_k(H,\bx)=0$ for some $k$ and all $\bx$. Then
$\HF_k(H)=0$. Moreover, let $c\in \R$ be such that all capped
one-periodic orbits $\bx_i$ of $H$ with action $c$ are isolated. (As a
consequence, there are only finitely many orbits with action close to
$c$.) Then, if $\eps>0$ is small enough,
$$
\HF_*^{(c-\eps,\,c+\eps)}(H)=\bigoplus_i \HF_*(H,\bx_i).
$$

By definition, the \emph{support} of $\HF_*(H,\bx)$ is the collection
of integers $k$ such that $\HF_k(H,\bx)\neq 0$. By \eqref{eq:mean-cz}
and continuity of the mean index, $\HF_*(H,\bx)$ is supported in the
range $[\Delta_H(\bx)-n,\, \Delta_H(\bx)+n]$. Moreover, when $x$ is
weakly non-degenerate, the support is contained in the open interval
$(\Delta_H(\bx)-n,\, \Delta_H(\bx)+n)$; cf.\ \cite{SZ}.

\subsection{Action selectors}
The theory of Hamiltonian \emph{action selectors} or \emph{spectral
  invariants}, as they are usually referred to, was developed in its
present Floer--theoretic form in \cite{Oh,schwarz} although the first
versions of the theory go back to \cite{HZ,Vi:gen}. (See also, e.g.,
\cite{EP,FS,FGS,Gi:we,Gu,MS,U1,U2} and references therein for other
versions, some of the applications and a detailed discussion of the
theory; here, we mainly follow \cite{GG:gaps}.)

Let $M$ be a closed symplectic manifold and let $H$ be a Hamiltonian
on $M$. We assume that $M$ is rational --
this assumption greatly simplifies the theory (cf.\ \cite{U1}) and is
obviously satisfied for negative monotone manifolds.

The \emph{action selector} $\s$ associated with the fundamental class
$[M]\in \H_{2n}(M;\F) \subset \HF_n(H)$ is defined as
$$
\s(H)= \inf\{ a\in \R\ssminus \CS(H)\mid u\in \im(i^a)\}
=\inf\{ a\in \R\ssminus \CS(H)\mid j^a(u)=0\},
$$
where $i^a\colon \HF_*^{(-\infty,\,a)}(H)\to \HF_*(H)$ and
$j^a\colon \HF_*(H)\to\HF_*^{(a,\, \infty)}(H)$ are the natural
``inclusion'' and ``quotient'' maps.
Then $\s(H)>-\infty$ as is easy to see; \cite{Oh}. 

The action selector $\s$ has the following properties:

\begin{itemize}

\item[(AS1)] Normalization: $\s(H)=\max H$ if $H$ is autonomous and
  $C^2$-small.
             
\item[(AS2)] Continuity: $\s$ is Lipschitz in $H$ in the
  $C^0$-topology.

\item[(AS3)] Monotonicity: $\s(H)\geq \s(K)$ whenever $H\geq K$
  pointwise.

\item[(AS4)] Hamiltonian shift: $\s(H+a(t))=\s(H)+\int_0^1a(t)\,dt$,
  where $a\colon S^1\to\R$.

\item[(AS5)] Symplectic invariance: $\s(H)=\s(\varphi^* H)$ for any
  symplectomorphism $\varphi$.

\item[(AS6)] Homotopy invariance: $\s(H)=\s(K)$ when
  $\varphi_H=\varphi_K$ in the universal covering of the group of
  Hamiltonian diffeomorphisms and both $H$ and $K$ are normalized to
  have zero mean.

\item[(AS7)] Triangle inequality or sub-additivity: $\s(H\#
  K)\leq\s(H)+\s(K)$.

\item[(AS8)] Spectrality: $\s(H)\in \CS(H)$.  More specifically, there
  exists a capped one-periodic orbit $\bx$ of $H$ such that
  $\s(H)=\CA_H(\bx)$.

\end{itemize}

This list of the properties of $\s$ is far from exhaustive, but it is
more than sufficient for our purposes. Most of the properties are
implicitly used in the proof of Lemma \ref{lemma:key} below and the
sub-additivity in the form
\begin{equation}
\label{eq:sub-add}
\s(H^{\# k})\leq k\s(H)
\end{equation} 
is crucial for the proof of Theorem \ref{thm:main}.  It is
worth emphasizing the rationality assumption plays an important role
in the proofs of the homotopy invariance and spectrality; see
\cite{Oh,schwarz} and also \cite{EP} for a simple proof. (The latter
property also holds in general for non-degenerate Hamiltonians. This
is a non-trivial result; \cite{U1}.)  Finally note that for the
triangle inequality to hold one has to work with a suitable definition
of the pair-of-pants product in Floer homology; cf.\ \cite{AS,U2}. We
refer the reader to \cite{U2} for a very detailed treatment of action
selectors in generality more than sufficient for our purposes.

\section{Proof of Theorem \ref{thm:main}}
\label{sec:proof}
As most of the proofs of the Conley conjecture type results, the proof
of Theorem \ref{thm:main} amounts to dealing with two
cases. The so-called degenerate case is established for all rational
symplectic manifolds in \cite{GG:gaps}. Here we consider the second,
the non-degenerate case.  (This terminology should be taken with a
grain of salt. For, for instance, in the non-degenerate case only some
orbits of the Hamiltonian must be non-degenerate, and in fact only
weakly non-degenerate. Note also that, when $c_1(TM)|_{\pi_2(M)}=0$,
this case was settled already in \cite{SZ}.) 

To illustrate the idea of
the proof, let us consider the following situation. Assume that an
orbit $x$ and its sufficiently large iteration $x^k$, equipped
with suitable cappings, ``represent'' the fundamental class in the
Floer homology. Denote
the corresponding capped orbits by $\bx$ and $\by=\bx^k\# A$. To
represent the fundamental class, the orbits $\bx$ and $\by$ must at
least have non-vanishing Floer homology in degree $n$. When $x$ is
weakly non-degenerate, we have $\Delta_H(\bx)>0$. Hence,
$\HF_n(H,\by)\neq 0$ implies, by \eqref{eq:delta}, that $I_{c_1}(A)<0$
for large $k$. On the other hand, due to the sub-additivity of the
action selector, we have
$$
k\CA_H(\bx)+I_\omega(A)=\CA_{H^{\# k}}(\by)=\s(H^{\# k})\leq k\s(H)=k\CA_H(\bx).
$$
Hence, $I_\omega(A)\leq 0$, which is impossible since for negative
monotone manifolds $I_{c_1}$ and $I_\omega$ have opposite signs. The
remaining case where $x$ is strongly degenerate and $\Delta_H(\bx)=0$
is considered in \cite{GG:gaps}.

To make this argument rigorous, we need to make sense 
of the statement that the orbits represent the fundamental class. This
is done using the notion of a carrier of the action selector,
discussed in the next section.

\subsection{Carrier of the action selector} 
\label{sec:carriers}
When $H$ is non-degenerate, the action selector can also be evaluated
as
$$
\s(H)=\inf_{[\sigma]=[M]}\CA_H(\sigma),
$$
where we set 
$$
\CA_H(\sigma)=\max\{\CA_H(\bx)\mid \alpha_{\bx} \neq 0\}\text{ for }
\sigma=\sum\alpha_{\bx} \bx\in\CF_n(H).
$$
The infimum here is obviously, when $M$ is rational, attained.  Hence
there exists a cycle $\sigma=\sum\alpha_{\bx} \bx\in\CF_n(H)$,
representing the fundamental class $[M]$, such that $\s(H)=\CA_H(\bx)$
for an orbit $\bx$ entering $\sigma$. In other words, $\bx$ maximizes
the action on $\sigma$ and the cycle $\sigma$ minimizes the action
over all cycles in the homology class $[M]$. We call such an orbit
$\bx$ a \emph{carrier} of
the action selector. Note that this is a stronger requirement than
just the equality $\s(H)=\CA_H(\bx)$. A carrier is not in general
unique, but it becomes unique when all one-periodic orbits of $H$ have
distinct action values.

Our goal is to generalize this definition to the case where all
one-periodic orbits of $H$ are isolated but possibly degenerate. Under
a $C^2$-small, non-degenerate perturbation $\tH$ of $H$, every such
orbit $x$ splits into several non-degenerate orbits, which are
close to $x$. Furthermore, a capping of $x$ naturally gives rise to a
capping of each of these orbits; cf.\ Section \ref{sec:LFH}.

\begin{Definition}
\label{def:carrier}
A capped one-periodic orbit $\bx$ of $H$ is a carrier of the action
selector for $H$ if there exists a sequence of $C^2$-small,
non-degenerate perturbations $\tH_i\stackrel{C^2}{\to} H$ such that
one of the capped orbits $\bx$ splits into is a carrier for
$\tH_i$. An orbit (without capping) is said to be a carrier if it
turns into one for a suitable choice of capping.
\end{Definition}

It is easy to see that a carrier necessarily exists, provided that $M$
is rational and all one-periodic orbits of $H$ are isolated.  As in
the non-degenerate case, a carrier is of course not unique in general
-- different choices of sequences $\tH_i$ can lead to different
carriers. However, it becomes unique when all one-periodic orbits of
$H$ have distinct action values. In other words, under the latter
requirement, the carrier is independent of the choice of the sequence
$\tH_i$.

Note also that one can always arrange for the sequence $\tH_i$ to
satisfy the additional condition
\begin{equation}
\label{eq:carrier-action}
\s(H)=\s(\tH_i)
\end{equation}
by adding a small constant to $\tH_i$. We can further assume that all
one-periodic orbits of $\tH_i$ have distinct action values. (This is a
consequence of the fact that the Floer complex is stable under small
perturbations of the Hamiltonian and auxiliary structures when all
regularity requirements are met.) In what follows, we will always pick
$\tH_i$ such that \eqref{eq:carrier-action} holds and the latter
condition is satisfied.

As an immediate consequence of the definition of the carrier and
continuity of the action and the mean index, we have
\begin{equation}
\label{eq:range}
\s(H)=\CA_H(\bx)\text{ and } 0\leq \Delta_H(\bx) \leq 2n,
\end{equation}
and the inequalities are strict when $x$ is weakly non-degenerate. 

We will need the following result asserting that a carrier of the
action selector is in some sense homologically essential.

\begin{Lemma} 
\label{lemma:key}
Assume that all one-periodic orbits of $H$ are isolated and let $\bx$
be a carrier of the action selector. Then $\HF_n(H,\bx)\neq 0$.
\end{Lemma}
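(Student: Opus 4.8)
The plan is to reduce the statement to the non-degenerate case, where it is essentially the characterization of the action selector via the minimizing cycle $\sigma$, and then to transfer this through the perturbation process using the fact that local Floer homology assembles the filtered Floer homology in a short action window. Let $\tH_i \stackrel{C^2}{\to} H$ be a sequence of non-degenerate perturbations as in Definition \ref{def:carrier}, chosen so that \eqref{eq:carrier-action} holds, all one-periodic orbits of $\tH_i$ have distinct action values, and one of the orbits $\bx_i$ that $\bx$ splits into is a carrier for $\tH_i$. Because $x$ is isolated, for $i$ large all of the orbits $\bx$ splits into lie in a fixed small tubular neighborhood $U$ of $x$, and they are the only orbits of $\tH_i$ with action near $c := \s(H) = \s(\tH_i)$.

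First I would establish the claim at the level of the perturbed Hamiltonian: for each non-degenerate $\tH_i$, if $\bx_i$ is a carrier in the classical sense, then $\HF_n(\tH_i)$ is ``detected'' at action level $c$ by $\bx_i$. Concretely, pick a minimizing cycle $\sigma_i = \sum \alpha_{\bz}\bz \in \CF_n(\tH_i)$ representing $[M]$ with $\CA_{\tH_i}(\sigma_i) = \s(\tH_i) = c$ and with $\bx_i$ the (unique, by the distinct-action assumption) orbit entering $\sigma_i$ on which the action is maximal. Since $\sigma_i$ minimizes the action in its homology class, $[\sigma_i] = [M] \neq 0$ does not lie in the image of $\CF_n^{(-\infty,\,c-\eps)}(\tH_i) \to \HF_n(\tH_i)$ for small $\eps>0$; but it does come from $\CF_n^{(-\infty,\,c+\eps)}(\tH_i)$. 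Hence the image of $[\sigma_i]$ in $\HF_n^{(c-\eps,\,c+\eps)}(\tH_i)$ is nonzero. By the splitting of filtered Floer homology into local pieces (the displayed formula in Section \ref{sec:LFH}, applied to $H$ and then compared with $\tH_i$ via homotopy invariance of local Floer homology), $\HF_n^{(c-\eps,\,c+\eps)}(\tH_i)$ is a direct sum of the local Floer homologies $\HF_n(\tH_i, \bx_{i,j})$ over the finitely many orbits $\bx_{i,j}$ that the action-$c$ orbits of $H$ split into. Since $\bx_i$ carries the maximal-action part of a cycle whose image survives in this window, its contribution must be nonzero: $\HF_n(\tH_i, \bx_i) \neq 0$.

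The remaining step is to promote $\HF_n(\tH_i, \bx_i) \neq 0$ to $\HF_n(H, \bx) \neq 0$. Here I use that the local Floer homology of $H$ at $\bx$ is, by its very definition, computed from a non-degenerate $C^2$-small perturbation: $\HF_*(H,\bx) = \HF_*(\tH_i, \bx)$ for $i$ large, where on the right the complex is generated by all the orbits $\bx$ splits into, with their inherited cappings. Since $\bx_i$ is one of those orbits, there is a natural identification (or at least a subcomplex/quotient-complex relationship within $U$) relating $\HF_*(\tH_i,\bx_i)$ to a summand or subquotient of $\HF_*(\tH_i,\bx) = \HF_*(H,\bx)$. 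The cleanest route is to choose the neighborhood in Definition \ref{def:carrier} even smaller around $x_i$ to see $\HF_*(\tH_i,\bx_i)$, then argue that isolatedness of $x$ forces the local homology of $H$ at $\bx$ to dominate — i.e.\ $\HF_n(H,\bx)$ contains $\HF_n(\tH_i, \bx_i)$ as it is assembled from the orbits in $U$, one of which is $x_i$. A Mayer--Vietoris / long-exact-sequence argument for the action filtration restricted to $U$, together with the fact that $\bx_i$ sits at the top of the action within $U$ for $\tH_i$, yields $\HF_n(H,\bx) \neq 0$.

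\textbf{Main obstacle.} The delicate point is the bookkeeping of cappings and the precise comparison in the last paragraph: one must be careful that the capping inherited by $\bx_i$ from $\bx$ is exactly the one making the Conley--Zehnder indices match up, so that ``degree $n$'' means the same thing on both sides, and that the local homology of $H$ at $\bx$ genuinely receives a nonzero contribution from $\bx_i$ rather than having it cancelled in the differential by other orbits in $U$. The carrier property — that $\bx_i$ maximizes the action on a \emph{global} minimizing cycle — is what rules out such cancellation: any differential into or out of $\bx_i$ at the chain level would either lower the action of the cycle (contradicting minimality) or show $[\sigma_i]$ dies in the window (contradicting $[\sigma_i] = [M] \neq 0$). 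Making this localized non-cancellation argument rigorous, using only the properties (AS1)--(AS8) and the local-to-filtered assembly, is the heart of the proof.
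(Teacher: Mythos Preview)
Your overall strategy---pass to a non-degenerate perturbation, use that the minimizing cycle $\sigma$ detects $[M]$ in the action window $(c-\eps,c+\eps)$, and then identify the $\bx$-component of this class with a nonzero element of $\HF_n(H,\bx)$---is the right shape and is essentially what the paper does. However, two of your intermediate steps do not work as written, and the step you flag as the ``main obstacle'' is indeed the entire content of the lemma; your sketch of how to overcome it is not correct.

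First, the detour through $\HF_n(\tH_i,\bx_i)$ is vacuous: since $\bx_i$ is a non-degenerate orbit with $\MUCZ(\bx_i)=n$, one has $\HF_n(\tH_i,\bx_i)=\F$ automatically, with no input from the carrier property. This tells you nothing about $\HF_n(H,\bx)=\HF_n(\tH_i,\bx)$, which is the homology of the complex generated by \emph{all} orbits in the cluster $Y$ that $\bx$ splits into. There is no general ``summand or subquotient'' relationship between the local homology at a single non-degenerate orbit and the local homology of the cluster containing it; a generator can perfectly well be killed by the local differential. Relatedly, your claim that $\bx_i$ ``sits at the top of the action within $U$'' is unjustified: $\bx_i$ maximizes the action on $\sigma$, not on $Y$, and in any case maximal action in $Y$ would only prevent $\bx_i$ from being a local boundary, not force it to be a local cycle. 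Also, the sentence ``they are the only orbits of $\tH_i$ with action near $c$'' in your first paragraph is false in general: other capped one-periodic orbits of $H$ may have action exactly $c$, so the window homology can have several local summands, and you must single out the $\bx$-summand.

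What actually needs to be shown---and what the paper proves directly at the chain level---is that the chain $\sigma_Y$, consisting of the orbits from $Y$ entering $\sigma$, is closed and not exact in the local complex $\CF_*(\tH,\bx)$. Both halves use a uniform energy lower bound $\eps>0$ for Floer trajectories connecting orbits in $U$ to orbits outside $U$. Closedness: if $\p_Y\sigma_Y\neq 0$, then since $\p\sigma=0$ some orbit $\bz\notin Y$ in $\sigma$ must be connected to an orbit in $Y$, forcing $\CA_{\tH}(\bz)>\CA_{\tH}(\by)$, contradicting that $\by=\bx_i$ is the action maximizer in $\sigma$. Non-exactness: if $\sigma_Y=\p_Y\eta$ with $\eta$ in $Y$, then $\sigma-\p\eta$ is a cycle homologous to $\sigma$ whose orbits either lie in $\sigma\setminus Y$ (action strictly below $\CA_{\tH}(\by)$ by uniqueness of the maximizer) or arise from trajectories leaving $U$ (action at most $c+\eps/2-\eps<c$), contradicting that $\sigma$ was action-minimizing. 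Your final paragraph gestures at this min-max tension, but the precise mechanism is the energy gap between $U$ and its complement, not a Mayer--Vietoris or filtration argument internal to $U$.
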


\begin{proof}
  Let $\tH$ be a one of the Hamiltonians in the sequence $\tH_i$ and
  let $\by$ be a carrier of the action selector for $\tH$ in the
  collection $Y$ of non-degenerate orbits which $\bx$ splits
  into. Thus, $\by\in Y$ is an action maximizing orbit in an action
  minimizing cycle $\sigma=\by+\ldots\in \CF_n(\tH)$. As has been
  pointed out above, we may assume that $\by$ is a unique action
  maximizer in $\sigma$ and that $\CA_{\tH}(\by)=\s(H)$.  

  At this point it is convenient to specify how close $\tH$ must be to
  $H$.  Fix a small, isolating neighborhood $U$ of $x$ and let $V$ be
  the union of small neighborhoods of the remaining one-periodic
  orbits of $H$. (In particular, $U$ and $V$ are disjoint.) Thus, for
  instance, the group $Y$ comprises one periodic orbits of $\tH$
  contained in $U$ and equipped with the cappings inherited from
  $\bx$. (Strictly speaking, here, as in the definition of the local
  Floer homology, one should work with tubular neighborhoods of
  periodic orbits in the extended phase-space $S^1\times M$.) It is
  easy to show that for every $\tH$, which is $C^2$-close to $H$, all
  one-periodic orbits of $\tH$ are contained in $U\cup V$ and,
  moreover, every solution $u$ of the Floer equation connecting an
  orbit in $U$ and an orbit in $V$ must have energy $E(u)> \eps$ for
  some constant $\eps>0$ independent of $u$ and $\tH$; see, e.g.,
  \cite[Section 1.5]{Sa} or \cite[Lemma 19.8]{FO}. This is our first
  closeness requirement. Furthermore, we pick $\tH$ so $C^2$-close to
  $H$ that every orbit from $Y$ has action in the range
  $(\s(H)-\eps/2,\, \s(H)+\eps/2)$.

  Denote by $\sigma_Y$ the chain formed by the orbits from $Y$
  entering the cycle $\sigma$. For instance, $\by$ enters the chain
  $\sigma_Y$. Our goal is to prove that $\sigma_Y\in \CF_*(\tH,\bx)$
  is closed but not exact.

  First let us show that $\sigma_Y$ is in fact a cycle in
  $\CF_*(\tH,\bx)$.  To see this, denote by $\p_Y \sigma_Y$ the part
  of the cycle $\p \sigma_Y$ formed by the orbits that also belong to
  $Y$. We need to show that $\p_Y\sigma_Y=0$. Assume the contrary: an
  orbit $\bz'$ enters this cycle. Then, since $\sigma$ is closed,
  there must be an orbit $\bz\not\in Y$ in $\sigma$ connected to
  $\bz'$ by a Floer trajectory $u$. In particular, $u$ connects an
  orbit in $V$ to an orbit in $U$ and hence $E(u)>\eps$. Thus
$$
\CA_{\tH}(\bz)\geq \CA_{\tH}(\bz')+E(u)>
\s(H)-\frac{\eps}{2}+\eps>\s(H)=\CA_{\tH}(\by).
$$
This is impossible since $\by$ is an action maximizing orbit in the
cycle $\sigma$.
 
To finish the proof, it remains to show that $\sigma_Y$ is not exact
in $\CF_*(\tH,\bx)$. Assume the contrary. Then there exists a chain
$\eta$ formed by some orbits from $Y$ such that $\p_Y\eta=\sigma_Y$ in
obvious notation. We have $\p \eta=\p_Y \eta+\lambda$, where all
orbits entering $\lambda$ are contained in $V$, while those from $\p_Y
\eta$ are contained in $U$. By the definition of the Floer
differential, any orbit $\bz$ entering $\lambda$ is connected to an
orbit entering $\eta$ by a Floer trajectory $u$ beginning in $U$ and
ending in $V$. Hence,
$$
\CA_{\tH}(\bz)\leq \ \s(H)+\eps/2-\eps<\s(H)=\CA_{\tH}(\sigma).
$$
As a consequence, every orbit in the cycle 
$$
\sigma'=\sigma-\p \eta = (\sigma-\sigma_Y)-\lambda
$$ 
has action strictly smaller than $\CA_{\tH}(\sigma)$.  (Indeed, an
orbit from $\sigma'$ either enters $\sigma$, but not $Y$, or enters
$\lambda$. In the latter case, the action does not exceed
$\CA_{\tH}(\sigma)-\eps/2$ as we have just shown. In the former case,
the action is again smaller than
$\CA_{\tH}(\sigma)=\CA_{\tH}(\by)$. For $\by$ is a unique action
maximizing orbit in $\sigma$.)  To summarize, we have $[\sigma']=[M]$
and $\CA_{\tH}(\sigma')<\CA_{\tH}(\sigma)$. This contradicts our
choice of $\sigma$ as an action minimizing cycle.
\end{proof}

\begin{Remark}
  We finish this discussion with one minor, fairly standard, technical
  point. Namely, recall that the Floer complex of a non-degenerate
  Hamiltonian $H$ depends not only on $H$ but also on an auxiliary
  structure $J$, e.g., an almost complex structure when $M$ is weakly
  monotone. Moreover, the complex is defined only when suitable
  regularity requirements are met. As a consequence, an action carrier
  is in reality assigned to the pair $(H,J)$ rather than to just a
  Hamiltonian $H$ in either non-degenerate or degenerate case. Thus,
  in Definition \ref{def:carrier}, we tacitly assumed the presence of
  an auxiliary structure $J$ in the background and that the regularity
  requirements are satisfied for the sequence of perturbations. This
  can be achieved by either considering regular pairs $(\tH_i,J_i)$
  with $J_i\to J$ or even by setting $J_i=J$; cf.\ \cite{FHS,SZ}.
\end{Remark}

\subsection{Proof}
Arguing by contradiction, assume that $H$ has finitely many simple,
i.e., non-iterated, periodic orbits. By passing if necessary to an
iteration (and ``adjusting the time''), we can also assume that all
periodic orbits are one-periodic.

\subsubsection{A common action carrier} 
\label{sec:step1}
Our next goal is to show that after passing if necessary to an
iteration we can assume that there exists a one-periodic orbit $x$
(not capped) of $H$ such that its iterations $x^{k_i}$ are action
carriers for $H^{\# k_i}$ for some infinite sequence $k_1=1, k_2,
k_3,\ldots$.

Indeed, let $x_1,\ldots,x_m$ be all periodic (and thus in fact
one-periodic) orbits of $H$. Let us break down the set of positive
integers $\N$ into $m$ groups $Z_1,\ldots,Z_m$ by assigning
$k\in\N$ to $Z_j$ if $x_j$ is an action selector carrier for
$H^{\# k}$. If the action carrier for $H^{\# k}$ is not unique, we pick
it in an arbitrary fashion. Thus, we have
$$
\N=\bigsqcup_{j=1}^m Z_j
$$ 
with some of the sets $Z_j$ possibly empty. By the pigeonhole
principle, one of the sets $Z_j$, say $Z_1$, contains an infinite
subsequence $2^{l_s}$ of the sequence $k=2^l$ of iterations. Let us
replace $H$ by $H^{\# 2^{l_1}}$ and set $x=x_1^{2^{l_1}}$, adjusting
the time again. Then $x^{k_i}$ is a carrier for $H^{\# k_i}$, where
$k_1=1, k_2=2^{l_2-l_1}, k_3= 2^{l_3-l_1},\ldots$.

\subsubsection{Two alternatives and the degenerate case}
Let $\bx$ be the orbit $x$ capped as a carrier of $H$. Then
$\Delta_H(\bx)\geq 0$ and $\HF_n(H,\bx)\neq 0$ by \eqref{eq:range} and
Lemma \ref{lemma:key}. When $\Delta_H(\bx)=0$, the capped orbit $\bx$
is the so-called \emph{symplectically degenerate maximum} and in the
presence of such an orbit the Conley conjecture is proved for rational
symplectic manifolds in \cite{GG:gaps}. (See also
\cite{Gi:conley,GG:gap,He:irr} for the definition, a detailed
discussion and applications of this notion, which originates from
Hingston's proof of the Conley conjecture for tori; see \cite{Hi}.)

Thus, it remains to prove the theorem in the case where
$\Delta_H(\bx)>0$ which is done in the next subsection. Here we only
mention that this is the so-called \emph{weakly non-degenerate
  case}. For, as is shown in \cite{GG:gap}, whenever $\Delta_H(\bx)>
0$ and $\HF_n(H,\bx)\neq 0$, the orbit $x$ is weakly non-degenerate.

\subsubsection{Weakly non-degenerate case} Pick a sufficiently large
entree $k$ in the sequence $k_i$ from Section \ref{sec:step1} so that
$k\Delta_H(\bx)> 2n$. Then $n$ is not in the support of
$\HF_*(H,\bx^k)$. The capped orbit $\bx^k$ cannot be a carrier of the
action selector
for $H^{\# k}$ by \eqref{eq:range} and thus it becomes one after
non-trivial recapping by some sphere $A$. Denote by $\by=\bx^k\# A$
the resulting carrier for $H^{\# k}$. Applying \eqref{eq:delta} and
\eqref{eq:range} to $\by$, we have
$$
2n+I_{c_1}(A) < k\Delta_H(\bx)+I_{c_1}(A)=\Delta_{H^{\# k}}(\by)\leq 2n.
$$
Hence,
$$
I_{c_1}(A)<0.
$$
On the other hand, using the sub-additivity \eqref{eq:sub-add} of the
action selector, we obtain
$$
k\CA_{H}(\bx)+I_\omega(A)=\CA_{H^{\# k}}(\by)=\s(H^{{\# k}})\leq
k\s(H)=k\CA_H(\bx).
$$
Therefore, 
$$
I_\omega(A)\leq 0.
$$
However, we cannot have $I_{c_1}(A)<0$ and $I_\omega(A)\leq 0$
simultaneously for a negative monotone manifold $M$, since for $M$, as
has been mentioned above, $I_\omega=\lambda I_{c_1}$ with
$\lambda<0$. This contradiction completes the proof.

\subsection{Growth of orbits vs.\ decay of the mean indexes}
\label{sec:growth}
The notion of an action carrier also lends itself readily to the proof
of a simple result relating, in the symplectically aspherical case,
the growth of the number of orbits of a certain type to the decay of
the minimal mean index. To state this result, we need to change
slightly our notation and recall some standard terminology.  Let $x$
be a simple $\tau$-periodic orbit of the Hamiltonian diffeomorphism
$\varphi_H$. Thus $x$ comprises exactly $\tau$ fixed points of
$\varphi^\tau_H$ and as many $\tau$-periodic orbits of $H$. In what
follows, we do not distinguish these orbits of $H$. In other words, we
differentiate only \emph{geometrically distinct} simple periodic
orbits of $H$ or $\varphi_H$. Furthermore, we restrict our attention
only to $x$ such that the corresponding orbits of $H$ are
contractible.  Set $\Delta(x):=\Delta_{H^{\# \tau}}(x)$ and
$\HF_*(H,x):=\HF_*(H^{\# \tau},x)$, where on the right hand side we
can take any periodic orbit of $H$ representing $x$.

\begin{Proposition}
\label{prop:growth}
Let $H$ be a Hamiltonian on a closed, symplectically aspherical
manifold $M^{2n}$. Assume that all periodic orbits of $H$ are
isolated. Then
$$
2n \sum_x \frac{1}{\Delta(x)}\geq k ,
$$
where the sum is taken over all (geometrically distinct) simple
periodic orbits $x$ with period less than or equal to $k$ and
$\HF_n(H,x)\neq 0$ (and hence $0\leq \Delta(x)\leq 2n$).
\end{Proposition}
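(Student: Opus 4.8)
The plan is to run the common-action-carrier argument of Section~\ref{sec:step1} in the symplectically aspherical setting, where recapping is trivial, and to extract a counting inequality by tracking how the unique carrier distributes among the simple orbits as the period grows. First I would fix, for each $k$, a carrier $\bx_k$ of the action selector for $H^{\# k}$; since $M$ is symplectically aspherical, $\CA_{H^{\# k}}$ and $\MUCZ$ (hence $\Delta$) are well defined on uncapped orbits, so I may regard $\bx_k$ simply as a $k$-periodic orbit, i.e.\ as a simple $\tau$-periodic orbit $x$ with $\tau \mid k$ together with the choice of iteration $x^{k/\tau}$. By the aspherical analogue of \eqref{eq:range} together with Lemma~\ref{lemma:key}, we have $\HF_n(H^{\# k},\bx_k)\neq 0$ and $0\le\Delta_{H^{\# k}}(\bx_k)\le 2n$; writing $\bx_k=x^{k/\tau}$ this says $\HF_n(H,x)\neq 0$ and, by homogeneity of the mean index, $\Delta(x)=\Delta_{H^{\#\tau}}(x)\le \frac{2n}{k/\tau}$, equivalently $k/\tau \le 2n/\Delta(x)$ when $\Delta(x)>0$ (and $\Delta(x)=0$ cannot occur, since then $\Delta_{H^{\#k}}(\bx_k)=0$, putting us in the symplectically degenerate maximum situation, which contradicts either $\HF_n\neq0$ with large iteration or is excluded outright — this is the point I'd need to pin down carefully).

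Next I would set up the counting. For each simple periodic orbit $x$ (with $\HF_n(H,x)\ne 0$), let $\tau_x$ be its period and consider the set $S_x\subset\{1,\dots,k\}$ of those $j\le k$ for which the carrier $\bx_j$ for $H^{\# j}$ is an iteration of $x$. The carrier for $H^{\# j}$ is an orbit of $H^{\# j}$, hence a $j$-periodic orbit, hence $\tau_x\mid j$; so $S_x\subseteq\{\tau_x, 2\tau_x, 3\tau_x,\dots\}\cap\{1,\dots,k\}$, which has at most $\lfloor k/\tau_x\rfloor$ elements. On the other hand, for $j\in S_x$ we can write $\bx_j = x^{j/\tau_x}$, and applying the bound from the previous paragraph to this carrier gives $j/\tau_x \le 2n/\Delta(x)$, hence $j \le 2n\,\tau_x/\Delta(x)$. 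Therefore
$$
|S_x| \le \frac{2n}{\Delta(x)}.
$$
(Here one uses $\tau_x \le 2n/\Delta(x)$ implicitly as the case $j=\tau_x$, so the bound is genuinely just $2n/\Delta(x)$, not $2n\tau_x/\Delta(x)/\tau_x$ dressed up — the point being that the largest $j\in S_x$ is at most $2n\tau_x/\Delta(x)$ and the elements of $S_x$ are multiples of $\tau_x$, so there are at most $2n/\Delta(x)$ of them.)

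Finally I would sum. Since every $j\in\{1,\dots,k\}$ has a carrier $\bx_j$ which is an iteration of \emph{some} simple periodic orbit $x$ with $\HF_n(H,x)\ne 0$ and $\Delta(x)\le 2n$, the sets $S_x$ cover $\{1,\dots,k\}$ as $x$ ranges over such orbits; in fact, if carriers are chosen so that for each $j$ exactly one $x$ is recorded, the $S_x$ partition $\{1,\dots,k\}$. Moreover each such $x$ appearing has $\tau_x\le k$ — indeed $\tau_x$ divides any $j\in S_x\le k$, and $S_x\ne\emptyset$ for an $x$ that actually occurs. Hence
$$
k = \sum_x |S_x| \le \sum_x \frac{2n}{\Delta(x)} = 2n \sum_x \frac{1}{\Delta(x)},
$$
the sum over all geometrically distinct simple periodic orbits $x$ of period $\le k$ with $\HF_n(H,x)\ne 0$, which is exactly the claim. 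The step I expect to be the main obstacle is the careful verification that the carrier of $H^{\# j}$ is literally an iteration of a \emph{simple} orbit of $H$ and that passing between $\bx_j$ as an orbit of $H^{\# j}$ and as $x^{j/\tau_x}$ is compatible with the mean index, local Floer homology, and the range bound \eqref{eq:range} — i.e.\ the bookkeeping matching Definition~\ref{def:carrier} with the reindexing $\Delta(x)=\Delta_{H^{\#\tau_x}}(x)$, $\HF_*(H,x)=\HF_*(H^{\#\tau_x},x)$ — together with ruling out $\Delta(x)=0$ via the symplectically degenerate maximum dichotomy of \cite{GG:gaps}. Everything else is the pigeonhole/counting bookkeeping sketched above.
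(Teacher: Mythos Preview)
Your proof is correct and essentially identical to the paper's: for each simple orbit $x$ one forms the set (your $S_x$, the paper's $Z(x)$) of $j\le k$ for which the chosen carrier of $\s(H^{\#j})$ is an iterate of $x$, bounds $|S_x|\le 2n/\Delta(x)$ using the mean-index constraint \eqref{eq:range}, and sums over $x$ to get $k\le\sum_x 2n/\Delta(x)$. One clarification: you need not rule out $\Delta(x)=0$ at all---the paper handles it by the convention $1/0=\infty$ stated immediately after the proposition, so the inequality holds trivially in that case and no appeal to the symplectically degenerate maximum dichotomy is required.
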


Here we use the convention that when $\Delta(x)=0$ the left hand side
is infinite and the inequality is automatically satisfied. Thus the
proposition essentially concerns only the weakly non-degenerate
case. For, when there is a symplectically degenerate maximum
($\Delta(x)=0$ and $\HF_n(H,x)\neq 0$), the result holds trivially and
gives no information.

\begin{Example}
  Assume that $\Delta(x)>\delta>0$ for all simple periodic orbits with
  $\HF_n(H,x)\neq 0$. Then the number of such (geometrically distinct)
  orbits of period up to $k$, which we denote by $P(k)$, grows at
  least linearly with $k$. More precisely, $P(k)\geq \delta k/2n$.
\end{Example}

\begin{proof}[Proof of Propostion \ref{prop:growth}]
  Fix a positive integer $k$. For a simple periodic orbit $x$ with
  period $\tau \leq k$, let $Z(x)$ be the collection of positive
  integers $\ell\leq k$, divisible by $\tau$, such that the iteration
  $x^{\ell/\tau}$ is a carrier of the action selector for $H^{\#
    \ell/\tau}$. By definition,
$$ 
\bigcup_{x} Z(x)=\{1,2,\ldots,k\}.
$$
Note that $|Z(x)|\leq 2n/\Delta(x)$. Indeed,
$\Delta_{H^{\# r\tau}}(x^r)> 2n$ when $r > 2n/\Delta(x)$, and hence
  $x^r$ cannot be a carrier of the action selector. Thus
$$
\sum_x \frac{2n}{\Delta(x)}\geq \sum_x |Z(x)|\geq k
$$ 
and the proposition follows.
\end{proof}

\begin{Remark}
  In the context of symplectic topology, little is known about the
  growth of orbits for sufficiently general Hamiltonian
  diffeomorphisms $\varphi$ of symplectically aspherical
  manifolds. (Numerous growth results obtained by dynamical systems
  methods usually rely on restrictive additional requirements on
  $\varphi$ such as, for instance, hyperbolicity or topological
  requirements; see, e.g., \cite{KH} and references therein.) In the
  setting considered here, the strongest growth result is probably the
  one from \cite{SZ} asserting the existence of a simple periodic
  orbit for every sufficiently large prime period when $H$ is weakly
  non-degenerate. Thus, in this case, $P(k)$ grows at least as fast as
  $k/\ln k$.  The only linear growth result the authors are aware of
  is that of Viterbo, \cite{Vi:gen}, according to which the number of
  simple periodic orbits with positive action and period less than or
  equal to $k$ grows at least linearly with $k$ when $H\geq 0$ is a
  compactly supported, non-zero Hamiltonian on $\R^{2n}$ (or on any
  so-called wide manifold -- see \cite{Gu}).
\end{Remark}

\end{document}